\DeclareMathOperator{\enm}{End}
\DeclareMathOperator{\ext}{Ext}
\DeclareMathOperator{\id}{id}
\DeclareMathOperator{\im}{im}
\DeclareMathOperator{\hmm}{Hom}
\newcommand{\cat}[1]{\mathbf{#1}}
\newcommand{\grmcat}[1]
\newcommand{\gl}{\mathsf{GL}}
\newcommand{\defm}[1]{\mathsf{Def}_{#1}}
\newtheorem{proposition}{Proposition}
\newtheorem{theorem}{Theorem}
\newtheorem{lemma}{Lemma}
\newtheorem{corollary}{Corollary}
\newtheorem{definition}{Definition}
\newtheorem{remark}{Remark}
\newtheorem{example}{Example}
\begin{document}
\author{Arvid Siqveland}
\title{Countably Generated Matrix Algebras}

\maketitle

\begin{abstract} 
We define the completion of an associative algebra $A$ in a set $M=\{M_1,\dots,M_r\}$ of $r$ right $A$-modules in such a way that if $\mathfrak a\subseteq A$ is an ideal in a commutative ring $A$ the completion $A$ in the (right) module $A/\mathfrak a$ is $\hat A^M\simeq \hat A^{\mathfrak a}.$ This works by defining $\hat A^M$ as a formal algebra determined up to a computation in a category called GMMP-algebras. From deformation theory we get that the computation results in a formal algebra which is the prorepresenting hull of the noncommutative deformation functor, and this hull is unique up to isomorphism.
\end{abstract}

\section{Introduction}
All vector spaces and algebras will be over an arbitrary field $k.$ All algebras will be associative and unital. We say left or right ideal when necessary, the word ideal is reserved for two-sided ideals. When $M$ is a right module over an associative ring $A,$ we let $\rho_M:A\rightarrow\enm_{\mathbb Z}(M)$ denote the structure morphism, that is $\rho(a)(m)=ma.$

In the article \cite{Siqveland2301} we constructed schemes by using only completions of rings in maximal ideals. The goal of this paper is to find an alternative definition of the completion in an ideal that can be generalized to the completion of an associative $k$-algebra $A$ in a set $M=\{M_1,\dots,M_r\}$ of $r$ simple right $A$-modules. 

We define a category $\cat L$ called GMMP-algebras with a functor $\hat A:\cat L\rightarrow\cat{Alg}_k.$ For the set $M=\{M_1,\dots,M_r\}$ of $r$ simple right $A$-modules we associate a GMMP-algebra $L(M).$ Then we define the completion of $A$ in $M$ as $$\hat A_M=\hat A(L(M)).$$

These completions are associative rings $\Hat A_M$ fitting in the diagram 

$$\xymatrix{k^r\ar[dr]_{\id}\ar[r]&\hat A_M\ar[d]^\pi\\&k^r,}$$

such that $\hat A_M=\underset{\underset n\leftarrow}\lim A/(\ker\pi)^n,$ justifying the word completions, and such that the following diagram commutes:

$$\xymatrix{A\ar[r]^-\iota\ar[dr]_{\oplus\rho_i}&\enm_{\hat A_M}(\hat A_M\otimes_{k^r}(\oplus_{i=1}^rM_i))\ar[d]\\&\oplus_{i=1}^r\enm_k(M_i,M_i)}$$ which justify that this is a generalization of the completion in a single maximal ideal $\mathfrak m\subseteq C$ of a commutative $k$-algebra $C$ such that $M_1=C/\mathfrak m\simeq k.$

\section{Linear Algebra}
Let $B_W=\{w_i\}_{i=1}^n$ be a basis for a vector space $W$ and let $W\overset\kappa\rightarrow V\rightarrow 0$ be a quotient such that $B_V=\{v_i=\kappa(w_i)|1\leq i\leq d\leq n\}$ is a basis for $V.$ Then we have a system of unique relations

$$\begin{matrix}
\alpha_{11}v_1&+\alpha_{12}v_2&+&\cdots&+&\alpha_{1d}v_d&-v_{d+1}&=0\\
\alpha_{21}v_1&+\alpha_{22}v_2&+&\cdots&+&\alpha_{2d}v_d&-v_{d+2}&=0\\
\vdots&&&&&&&\vdots\\
\alpha_{(n-d)1}v_1&+\alpha_{(n-d)2}v_2&+&\cdots&+&\alpha_{(n-d)d}v_d&-v_{n}&=0.
\end{matrix}$$

Bring this system of equations to its reduced row-echelon form. Up to renumbering of the basis $B_W$ this can be written in the form:

\begin{equation}\label{releq}
\begin{matrix}
v_1&&&&+\beta_{1,r+1}v_{r+1}&+&\cdots&+&\beta_{1,n}v_n&=0\\
&v_2&&&+\beta_{2,r+1}v_{r+1}&+&\cdots&+&\beta_{2,n}v_n&=0\\
&&\ddots&&&&&&\vdots&\\
&&&v_{r}&+\beta_{r,r+1}v_{r+1}&+&\cdots&+&\beta_{r,n}v_n&=0.
\end{matrix}\end{equation}

\begin{definition}\label{relmorphdef} We define the relation morphism $d_{VW}:W\rightarrow W$ as the linear transformation given by $$d_{VW}(w_j)=\begin{cases}w_j-\sum_{i=1}^{n-r}\beta_{j,r+i}w_{r+i},&1\leq j\leq r,\\
0,&r<j\leq n.
\end{cases}
$$
\end{definition}

Due to the minus sign, we have that the $r$ expressions in (\ref{releq}) can be written $$f_i=\sum_{j=1}^nw_i^\ast(w_j)$$ where $\{w_1,\dots,w_r\}$ is a basis for $W/\im d_{VW}.$ If another basis of  $W/\im d.$ is chosen, this amounts to a linear change of variables. It follows that the elements $\{f_1,\dots,f_r\}$ are linearly independent, and so we consider the subspace 
$F=(f_1,\dots,f_r)\subseteq W$ with basis $\{f_1,\dots,f_r).$ We also let $F:W\rightarrow W$ denote the map $F(w_i)=\begin{cases}f_i,&1\leq i\leq r,\\
0,&i>r\end{cases}.$ We then have the following.

\begin{lemma}\label{rellemma} With the notation above, there is an exact sequence 
$$W\overset{F}\rightarrow W\overset\kappa\rightarrow V\rightarrow 0,$$ in particular $V\simeq W/F.$
\end{lemma}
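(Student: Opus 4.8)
The plan is to verify the three facts that together give exactness of $W\overset{F}\rightarrow W\overset{\kappa}\rightarrow V\rightarrow 0$: that $\kappa$ is surjective, that $\im F\subseteq\ker\kappa$, and that $\ker\kappa\subseteq\im F$. I would deduce the last inclusion from a dimension count rather than a direct computation, and then $V\simeq W/F$ follows at once from the first isomorphism theorem applied to $\kappa$.

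Surjectivity of $\kappa$ is immediate, since $B_V=\{v_i=\kappa(w_i)\}$ already spans $V$. For $\im F\subseteq\ker\kappa$: by the definition of $F$ we have $F(w_j)=0$ for $j>r$, so it suffices to show $\kappa(f_i)=0$ for $1\le i\le r$. But $f_i$ is precisely the lift to $W$ of the $i$-th row of the reduced system (\ref{releq}), obtained there by replacing each $v_j$ by $w_j$; applying $\kappa$ sends it back to the left-hand side of that row, which equals $0$ in $V$. Hence $\im F=(f_1,\dots,f_r)\subseteq\ker\kappa$.

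For the reverse inclusion I would compare dimensions. Since $\kappa$ is surjective with $\dim W=n$ and $\dim V=d$, we have $\dim\ker\kappa=n-d$. The original relation system has $n-d$ equations, and they are linearly independent: for each $\ell$ with $d<\ell\le n$, the $(\ell-d)$-th equation is the only one in which $v_\ell$ occurs, and it does so with coefficient $-1$. Hence passing to reduced row-echelon form preserves the number of nonzero rows, so $r=n-d$. Finally $f_1,\dots,f_r$ are linearly independent, since in each $f_i$ the summand $w_i$ has index $i\le r$ while every remaining summand lies among $w_{r+1},\dots,w_n$; thus $\dim\im F=r=n-d=\dim\ker\kappa$. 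Combined with $\im F\subseteq\ker\kappa$ this forces $\im F=\ker\kappa$, which is exactness at the middle term. Therefore $V=\im\kappa\simeq W/\ker\kappa=W/\im F=W/F$.

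The argument is mostly bookkeeping, and the one step that deserves care is the identity $r=n-d$: the rank of the relation system cannot exceed $n-d$ because there are only that many equations, and it cannot fall below $n-d$ by the independence observation above, so row reduction neither creates nor kills rank. It is also worth noting that the statement involves the map $F$, not the relation morphism $d_{VW}$ of Definition \ref{relmorphdef}; the sign convention adopted there plays no role in the present lemma.
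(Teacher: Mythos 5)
Your proposal is correct and follows the same underlying idea as the paper's one-sentence proof, namely that the $f_i$ are exactly the lifted defining relations of $V$ as a quotient of $W$, so that $\im F\subseteq\ker\kappa$. The only thing you add is the dimension count $r=n-d$ together with the linear independence of $f_1,\dots,f_r$ to force the reverse inclusion $\ker\kappa\subseteq\im F$; the paper leaves this step implicit (it records the linear independence of the $f_i$ just before the lemma), so your write-up is a legitimate filling-in of the same argument rather than a different route.
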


\begin{proof} This follows because $F$ maps to the defining relations of $V$ as quotient of $W.$
\end{proof}

When we generalize to the situation where $W$ is infinite dimensional, we will be in two different situations in the following. This leads to the concept of \emph{formal} vector spaces.

Let $B$ be a basis for a vector space $V$ over a field $k.$ Let $\widehat V_B$ be the direct product $$\widehat V_B=\prod_{b\in B}k\ni (\alpha_b)_{b\in B}, \alpha_b\in k, b\in B,$$ and notice that these are infinite products contrary to the direct sum $\sum_{b\in B}k=\sum_{b\in B}\alpha_bb$ in which only a finite number of the $\alpha_b$ are different from $0.$ It is clear that $\widehat V_B$ is a vector space under component-wise addition and scalar multiplication. As every vector space has a basis, this also holds for $\widehat V_B,$ but we see that if $B$ is infinite, then $B$ is a linearly independent set, but not a basis.

\begin{definition} The completion of a vector space $V$ over $k$ with respect to a basis $B$ is the vector space $\widehat V_B=\prod_{v\in B}k.$ A vector space which is isomorphic to a completion is called complete, or formal.
\end{definition}

\section{$k$-Algebras}\label{AlgebrasSection}

Let $V=\oplus_{i=1}^n kx_i$ be an $n$-dimensional vector space. Let $T^i(V)=V^{\otimes i}$ $i$ times for $i\geq 1$ and let $T^0(V)=k.$ 
The free associative algebra in $n$ variables over the field $k$ is the tensor-algebra 
$$F=\oplus_{i=0}^\infty T^i(V).$$ We use the notation
$$k\langle n\rangle=k\langle x_1,\dots,x_n\rangle\overset\rho\rightarrow k,$$ and we let $\mathfrak m=\ker\rho.$ With respect to the choice of base-point, we see that $F$ is augmented over $k$ by sending $x_i$ to $0,$ $1\leq i\leq n.$
Interpreting $\mathfrak m^0=k\langle n\rangle,$ we  can write the vector space $$k\langle n\rangle=\oplus_{i=0}^\infty \mathfrak m^i/\mathfrak m^{i+1}.$$
Each $n^i$-dimensional vector space $\mathfrak m^i/\mathfrak m^{i+1}$ has a basis consisting of all monomials of degree $i$ which corresponds bijectively to the set $B_i=\{w:|w|=i$\} of words in the alphabet with $n$ letters of length $i.$ 

We denote the free associative algebra generated by a countable arbitrary set $S$ over $k$ by $k\langle S\rangle=k\langle x_s;s\in S\rangle$ and we say that $k\langle\mathbb N\rangle$ is countably generated. We still have the augmentation $\rho:k\langle S\rangle\rightarrow k,\ \rho(x_s)=0,\ s\in S,$ and $\mathfrak m=\ker\rho.$

\begin{definition} Let $A$ be an associative $k$-algebra. A subset $S\subseteq A$ is called a generating set if there exists a surjective algebra homomorphism $k\langle S\rangle\rightarrow A.$ If $S$ has no subsets that are generating, $S$ is called a minimal generating set.
\end{definition}

For each chain $\{S_i\}_{i\in I\subseteq\mathbb N},\ S_i\subseteq S_{i+1},$ of generating sets we have that $S=\cap_{i\in I}S_i$ is a generating set so that by Zorn's Lemma, every $k$-algebra $A$ has a minimal generating set $S.$ This $S$ is necessarily a linearly independent set. There is a surjection $\phi:k\langle S\rangle\rightarrow A$ and $k\langle S\rangle/\ker\phi\simeq A.$ If the ideal $\ker\phi$ is finitely generated, we say that $A$ is finitely presented.

Let $A$ be a countably generated $k$-algebra, generated over a countable set $S$. Then $A$ is a vector space over $k$ spanned by all monomials $B$ which are products of elements in $S.$ This says $$B=\{s_1^{n_1}s_2^{n_2}\cdots s_m^{n_m}|s_i\in S,\ n\in\mathbb N,\ 1\leq i\leq m\}.$$ We choose an admissible ordering on the basis $B$ consisting of the monomials in $k\langle S\rangle,$ resulting in the relation morphism from Definition \ref{relmorphdef}, $$d:k\langle S\rangle\rightarrow k\langle S\rangle.$$
For each basis element in a basis $\{y_i\}_{i\in I}$ for $k\langle S\rangle/\im d$ we define the polynomial $f_i=\sum_{n\in B}y_i^\ast(n)n$ and consider the ideal $F=(f_i;i\in I).$ Then by Lemma \ref{rellemma} we have that $$A\simeq k\langle S\rangle/F.$$

\section{Formal Algebras}\label{formalgsect}

The free formal associative algebra in $n$ variables over the field $k$ is the tensor-algebra 
$$F=\prod_{i=0}^\infty T^i(V).$$ We use the notation
$$k\langle\langle n\rangle\rangle=k\langle\langle x_1,\dots,x_n\rangle\rangle\overset\rho\rightarrow k,$$ and we let $\mathfrak m=\ker\rho.$ With respect to the choice of base-point, we see that $F$ is augmented over $k$ by sending $x_i$ to $0,$ $1\leq i\leq n.$
Interpreting $\mathfrak m^0=k\langle n\rangle,$ we  can write the vector space $$k\langle\langle n\rangle\rangle=\prod_{i=0}^\infty \mathfrak m^i/\mathfrak m^{i+1}.$$

An algebra $A$ over $k$ with minimal generating set $S$ is a formal $k$-algebra if there exists a $k$-linear surjective homomorphism 
$$\sigma:k\langle\langle S\rangle\rangle=\prod_{i=0}^\infty \mathfrak m^i/\mathfrak m^{i+1}\twoheadrightarrow A$$
so that the vector-space $A$ is $A=\sum_{i=1}^\infty\sigma(\mathfrak m^i/\mathfrak m^{i+1}).$ For a monomial $m\in k\langle S\rangle$ we will refer to $\sigma(m)$ as a monomial in $A,$ and we note that the set of monomials in $A$ spans this vector-space. Choose successively a monomial basis for $\oplus_{i=1}^n\sigma(\mathfrak m^i/\mathfrak m^{i+1}).$ Every element in $A$ can be written as a formal sum of the elements in the successive basis, but we do not claim that this is a monomial basis for $A=\sum_{i=1}^\infty\sigma(\mathfrak m^i/\mathfrak m^{i+1}).$

This choice of successive basis defines the relation morphism $$d: k\langle\langle S\rangle\rangle\rightarrow k\langle\langle S\rangle\rangle$$ as in Definition \ref{relmorphdef}. It is important to see that $\sigma$ is an algebra homomorphism, but $d$ might not be more than a $k$-linear map.

Next, we will use a choice of successive basis and resulting relation morphism $d$ above to give a presentation of the algebra $A$ with finite generator set $S,\ |S|=n.$ 

\vskip0,2cm

Let $\overline H_0=k,\ \overline H_1=k\langle x_1,\dots,x_n\rangle/\mathfrak m^2.$ Put $\overline B_0=\{1\},\ B_1=\{x_1,\dots,x_n\},\overline B_1=\overline B_0\cup B_1.$ Then $\overline B_0$ is a basis for $\overline H_0,$ $B_1$ is a basis for $\mathfrak m/\mathfrak m^2$ and $\overline B_1$ is a basis for $\overline H_1.$ We have the following diagram,
$$\xymatrix{k\langle\langle S\rangle\rangle\ar[r]^\sigma\ar[d]&A\ar[d]\\\overline H_1\ar[r]^{\sigma_1}&A/\sigma(\mathfrak m^2),}$$

and by the minimality of $S$ the induced homomorphism $\sigma_1$ is an isomorphism. 
\vskip0,2cm
Put $H_2=k\langle x_1,\dots,x_n\rangle/\mathfrak m^3\overset{\pi_2'}\rightarrow\overline H_1$ and let $B_2'=\{x_ix_j|1\leq i,j\leq n\}.$ Then $B_2'$ is a basis for $\ker\pi_2'/\mathfrak m^3$ and $\overline B_2'=\overline B_1\cup B_2'$ is a basis for $H_2.$ This says that for each monomial $s\in H_2$ we have a unique relation $$s=\sum_{t\in\overline B_2'}\beta_{s,t}t.$$ For each $t\in\overline B_2'$ let 

$$\langle S,t\rangle=\sum_{s\in\overline B_2'}\sum_{\begin{tiny}\begin{matrix}t_1\cdot t_2=s\\t_1,t_2\in \overline B_1\end{matrix}\end{tiny}}\beta_{s,t}t_1t_2\in k\oplus\mathfrak m/\mathfrak m^2\oplus\mathfrak m^2/\mathfrak m^3\subseteq k\langle S\rangle/\mathfrak m^3.$$

Let $\{y_i\}_{i\in I}$ be a basis for $k\langle\langle S\rangle\rangle/\im d$ where $I$ is an index set. Consider the following element in $k\langle\langle S\rangle\rangle/\im d\otimes H_2:$
$$\begin{aligned}\sum_{n\in\overline B_2'}\langle S,n\rangle\otimes n&=\sum_{n\in\overline B_2'}(\sum_{i\in I}y_i^\ast(\langle S,n\rangle)y_i)\otimes n=\sum_{n\in\overline B_2'}(\sum_{i\in I}y_i^\ast(\langle S,n\rangle)y_i\otimes n)\\
&=\sum_{i\in I}y_i\otimes(\sum_{n\in\overline B_2'}y_i^\ast(\langle S,n\rangle)n).\end{aligned}$$

For each $i\in I$ we put $$f_i^2=\sum_{t\in\overline B_2'}y_i^\ast\langle S,t\rangle t\in H_2$$ where $y_i^\ast$ denotes the dual of $y_i\in k\langle\langle S\rangle\rangle.$ Let $F^2=(f_i^2:i\in I)$ and let $$\overline H_2=H_2/F^2=k\langle x_1,\dots,x_n\rangle/(\mathfrak m^3+F^2)\overset{\pi_2}\rightarrow\overline H_1$$ and choose a basis $B_2\subseteq B_2'$ for $\ker\pi_2.$ Then $\overline B_2=\overline B_1\cup B_2$ is a basis for $\overline H_2$ and we have a unique relation in $\overline H_2.$ For each $s\in H_2,\ s=\sum_{t\in\overline B_2}\beta_{s,t}t.$  We now have the diagram

$$\xymatrix{k\langle\langle S\rangle\rangle\ar[r]^\sigma\ar[d]&A\ar[d]\\\overline H_2\ar[r]^{\sigma_2}&A/\sigma(\mathfrak m^3)}$$ and $\sigma_2$ is a $k$-linear isomorphism because of the dimensions of the finite dimensional linear spaces. The condition in $\overline H_2$ saying that $f_i^2=0,\ i\in I,$ says that in $k\langle\langle S\rangle\rangle/\im d\otimes\overline H_2$ we have

$$\begin{aligned}0=\sum_{i\in I}y_i\otimes f_i^2&=\sum_{i\in I}y_i\otimes\sum_{t\in\overline B_2'}y_i^\ast\langle S,t\rangle t=\sum_{t\in\overline B_2'}\langle S,t\rangle\otimes t=\sum_{t\in\overline B_2'}\langle S,t\rangle\otimes\sum_{u\in\overline B_2}\beta_{t,u}u\\
&=\sum_{u\in\overline B_2}(\sum_{t\in\overline B_2'}\beta_{t,u}\langle S,t\rangle)\otimes u=\sum_{u\in\overline B_2}d(\sigma(u))\otimes u.\end{aligned}$$

For each $u\in B_2$ we have that $$d(\sigma(u))=\sum_{t\in\overline B_2'}\beta_{t,u}\langle S,t\rangle.$$ 

\vskip0,2cm

For $N\geq 2,$ assume that $\overline H_{N}=k\langle\langle S\rangle\rangle/(F^N+\mathfrak m^{N+1})$ has been constructed together with monomial bases $B_{N},\overline B_N.$  Put $$H_{N+1}=k\langle x_1,\dots,x_n\rangle/(\mathfrak m^{N+2}+\mathfrak m F^N+F^N\mathfrak m)\overset{\pi_{N+1}'}\rightarrow\overline H_N$$ and notice that $\ker\pi_{N+1}'$ is spanned by the set of monomials $M=x_i\cdot\overline B_N\cup\overline B_N\cdot x_i,\ 1\leq i\leq n.$ We can write 

$$\begin{aligned}\ker\pi_{N+1}'&=\mathfrak m^{N+1}/(\mathfrak m^{N+2}+\mathfrak m^{N+1}\cap(\mathfrak m F^N+F^N\mathfrak m))\bigoplus F^N/(\mathfrak m F^N+F^N\mathfrak m)\\&=I_{N+1}\oplus F^N/(\mathfrak m F^N+F^N\mathfrak m).\end{aligned}$$

Choose a monomial basis $B_{N+1}'\subseteq M$ for $I_{N+1}.$ By definition the generating polynomials $f_i, i\in I$ are linearly independent such that when $\overline B_{N+1}'=\overline B_N\cup B_{N+1}'$ then $\overline B_{N+1}'\cup \{f_i:i\in I\}$ is a basis for $H_{N+1}.$ This says that every monomial $s\in H_{N+1}$ can be uniquely written 
$$s=\sum_{t\in\overline B_{N+1}'}\beta_{s,t}t+\sum_{i\in I}\beta_{s,i}f_i.$$

For each $t\in B_{N+1}'$ let $$\langle S,t\rangle=\sum_{s\in\overline B_{N+1}'}\sum_{\begin{tiny}\begin{matrix}t_1\cdot t_2=s\\t_1,t_2\in \overline B_N\end{matrix}\end{tiny}}\beta_{s,t}d(\sigma(t_1t_2))\in\oplus_{i=0}^{N}\mathfrak m^i/\mathfrak m^{i+1}\subseteq k\langle\langle S\rangle\rangle/\mathfrak m^{N+1}.$$ Then consider the following element in $k\langle\langle S\rangle\rangle/\im d\otimes H_{N+1}:$

$$\begin{aligned}\sum_{n\in\overline B_{N+1}'}\langle S,n\rangle\otimes n&=\sum_{n\in\overline B_{N+2}'}(\sum_{i\in I}y_i^\ast(\langle S,n\rangle)y_i)\otimes n=\sum_{n\in\overline B_{N+1}'}(\sum_{i\in I}y_i^\ast(\langle S,n\rangle)y_i\otimes n)\\
&=\sum_{i\in I}y_i\otimes(f_i^N+\sum_{n\in B_{N+1}'}y_i^\ast(\langle S,n\rangle)n).\end{aligned}$$

For each $i\in I$ put $$f_i^{N+1}=f_i^N+\sum_{t\in B_{N+1}'}y_i^\ast(\langle S,t\rangle)t$$ and define $$\overline H_{N+1}=H_{N+1}'/(f_i:i\in I)=k\langle\langle S\rangle\rangle/F_{N+1}\overset{\pi_{N+1}}\rightarrow\overline H_N.$$ Choose a monomial basis $B_{N+1}$ for $\ker\pi_{N+1}$ so that $\overline B_{N+1}=B_{N+1}\cup\overline B_N$ is a monomial basis for $\overline H_{N+1}.$ We have the diagram

$$\xymatrix{k\langle\langle S\rangle\rangle\ar[r]^\sigma\ar[d]&A\ar[d]\\\overline H_{N+1}\ar[r]^{\sigma_{N+1}}&A/\sigma(\mathfrak m^{N+2})}$$ and $\sigma_{N+1}$ is a $k$-linear isomorphism because of the dimensions of the finite dimensional linear spaces. Notice the condition in $\overline H_{N+1}$ saying that $f_i^{N+1}=0,\ i\in I,$ says that for each $u\in B_{N+1}$ we have that $$d(\sigma(u))=-\sum_{t\in\overline B_2'}\beta_{t,u}\langle S,t\rangle.$$

By induction,  it follows that $\underset{\underset{n\geq 1}\longleftarrow}\lim\ \overline H_{N+1}\simeq \hat A.$

%By induction, it follows that when we assume that $A$ is algebraic, there exists an  $N\geq 2$ such %that $k\langle S\rangle/F_N\simeq A.$ Notice also that
%$\underset{\underset{n\geq 1}\longleftarrow}\lim\ \overline H_{N+1}\simeq \hat A,$ the completion of $\hat A$ in the maximal ideal containing $\sigma(\mathfrak m).$ Thus this construction also works in the formal situation.

\section{Matrix Polynomial Algebras}

Let $V=\{V_{ij}\}_{1\leq i,j\leq r}$ be vector spaces with basis $\{x_{ij}(l)\}_{1\leq l\leq n_{ij}}\subset V_{ij}.$ We put $T^i_{k^r}(V)=(V_{ij})^{\otimes_{k^r}i}$ for $i>0$ and $T^0_{k^r}=k^r.$

We define the (certainly not) free matrix polynomial algebra by the following:

\begin{definition} The Free $r\times r$ Matrix Polynomial Algebra in $N=(n_{ij})$ variables is the algebra $$k\langle N\rangle=\oplus_{i\geq 0}T_{k^r}^i(V).$$ 
\end{definition}

We see that this is a $k^r$-algebra which is augmented over $k^r$ by sending $x_{ij}(l)$ to $0$ for all $i,j,l,$ so that $k\langle N\rangle$ fits in the diagram 

$$\xymatrix{k^r\ar[r]^\iota\ar[dr]_{\id}&k\langle N\rangle\ar[d]^\rho\\&k^r.}$$

We let $\mathfrak m=\ker\rho$ and can copy the algorithms from Sections \ref{AlgebrasSection} and \ref{formalgsect} word by word.

\begin{lemma} Let $A$ be a $k^r$-algebra augmented over $k^r,$ and let $\mathfrak a\subseteq A$ be an ideal. Then $\mathfrak a=\oplus_{1\leq i,j\leq r}\mathfrak a_{ij}\subseteq A_{ij}$ where $\mathfrak a_{ij}=e_i\mathfrak a e_j.$
\end{lemma}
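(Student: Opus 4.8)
The plan is to exploit the idempotent decomposition of the identity in $k^r$ and show that multiplication by these idempotents splits the ideal $\mathfrak a$ compatibly with the decomposition of $A$ itself. First I would recall that since $A$ is augmented over $k^r$, the image of $k^r$ in $A$ supplies a complete set of orthogonal idempotents $e_1,\dots,e_r$ with $\sum_i e_i = 1$ and $e_i e_j = \delta_{ij}e_i$, and that $A$ as a $k^r$-bimodule decomposes as $A = \bigoplus_{1\le i,j\le r} e_i A e_j = \bigoplus_{i,j} A_{ij}$. This is the standard Peirce decomposition; it requires nothing beyond the $k^r$-bimodule structure already in place from the diagram defining $k\langle N\rangle$ and, more generally, any augmented $k^r$-algebra.

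Next I would observe that for any element $a \in A$ we may write $a = 1\cdot a\cdot 1 = \bigl(\sum_i e_i\bigr)a\bigl(\sum_j e_j\bigr) = \sum_{i,j} e_i a e_j$, so the components of $a$ in the Peirce decomposition are exactly the $e_i a e_j$. The key point is that if $a \in \mathfrak a$, then since $\mathfrak a$ is a two-sided ideal, each $e_i a e_j$ again lies in $\mathfrak a$ (it is obtained from $a$ by left and right multiplication by elements of $A$). Therefore $e_i a e_j \in \mathfrak a \cap A_{ij} =: \mathfrak a_{ij}$, and $a = \sum_{i,j} e_i a e_j$ exhibits $a$ as a sum of elements of the $\mathfrak a_{ij}$. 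Conversely $\bigoplus_{i,j}\mathfrak a_{ij} \subseteq \mathfrak a$ trivially, and the sum is direct because it sits inside the direct sum $\bigoplus_{i,j} A_{ij}$. This gives $\mathfrak a = \bigoplus_{1\le i,j\le r}\mathfrak a_{ij}$ with $\mathfrak a_{ij} = e_i\mathfrak a e_j \subseteq A_{ij}$, which is the claim.

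I do not anticipate a serious obstacle here; the statement is essentially a bookkeeping consequence of the Peirce decomposition together with the defining property of a two-sided ideal. The only point requiring a little care is the identification $e_i\mathfrak a e_j = \mathfrak a \cap A_{ij}$: the inclusion $\subseteq$ uses that $\mathfrak a$ is two-sided, while $\supseteq$ uses that any element of $A_{ij}$ is fixed by the idempotent sandwich $x \mapsto e_i x e_j$. Once this is noted the decomposition of $\mathfrak a$ follows immediately by applying the decomposition of $A$ and intersecting.

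\begin{proof}
Since $A$ is augmented over $k^r$, the images $e_1,\dots,e_r$ of the standard idempotents of $k^r$ form a complete set of orthogonal idempotents in $A$, so $A = \bigoplus_{1\le i,j\le r} e_i A e_j = \bigoplus_{i,j} A_{ij}$ as a $k^r$-bimodule, with the component of $a\in A$ in $A_{ij}$ given by $e_i a e_j$. If $a\in\mathfrak a$, then since $\mathfrak a$ is a two-sided ideal each $e_i a e_j\in\mathfrak a$, hence $e_i a e_j\in\mathfrak a\cap A_{ij}$. Writing $a=\sum_{i,j}e_i a e_j$ shows $\mathfrak a\subseteq\bigoplus_{i,j}(\mathfrak a\cap A_{ij})$, and the reverse inclusion is immediate; the sum is direct because it lies inside $\bigoplus_{i,j}A_{ij}$. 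Finally $\mathfrak a\cap A_{ij}=e_i\mathfrak a e_j$: an element of $A_{ij}$ is unchanged by $x\mapsto e_i x e_j$, giving $\supseteq$, while $\subseteq$ follows again from $\mathfrak a$ being two-sided. Therefore $\mathfrak a=\bigoplus_{1\le i,j\le r}\mathfrak a_{ij}$ with $\mathfrak a_{ij}=e_i\mathfrak a e_j\subseteq A_{ij}$.
\end{proof}
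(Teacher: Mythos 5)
Your proof is correct and follows essentially the same route as the paper's (very terse) argument: both rest on the Peirce decomposition $A=\bigoplus_{i,j}e_iAe_j$ induced by the orthogonal idempotents from the $k^r$-augmentation, together with the observation that a two-sided ideal is stable under the sandwich $a\mapsto e_iae_j$. You simply supply the bookkeeping details that the paper leaves implicit.
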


\begin{proof} This follows trivially because $\mathfrak a$ is a (two-sided) ideal and so contains the subideal generated by de idempotents.
\end{proof}

\section{GMMP-algebras}\label{GMMPDef}

\begin{definition}
A GMMP-algebra consists of the data $(V,W,\cup,d)$ where $V,W$ are vector spaces  over a field $k$ and  $\cup:V\otimes_k V\rightarrow W,\ d:V\rightarrow W$ are linear maps. A morphism from $(V,W,\cup,d)$ to $(V',W',\cup',d')$ consists of a pair of linear maps $\phi:V\rightarrow V',\ \psi:W\rightarrow W'$ such that the following diagrams commutes:

$$\xymatrix{V\otimes_k V\ar[r]^{\cup}\ar[d]_{\phi\otimes_k\phi}&W\ar[d]^{\psi}\\V'\otimes_k V'\ar[r]^{\cup'}&W',}
\xymatrix{V\ar[r]^d\ar[d]_{\phi}&W\ar[d]^{\psi}\\V'\ar[r]^{d'}&W'.}$$
\end{definition}

For a GMMP-algebra $L=(V,W,\cup,d),$ if $X=\{x_1,\dots,x_n\}\subseteq V$ is a linearly independent set, we let $B=\{x_{i_1}\cup\cdots\cup x_{i_l}|\ l\geq 0,\ 1\leq i_d\leq n, 1\leq d\leq l\}.$ Then $B$ is the set of all monomials in $W$ under the product $\cup.$

\begin{definition} A GMMP-algebra $L=(V,W,\cup,d)$ is called polynomial with respect to a linearly independent set $X\subseteq V$ if the set $B$ is a basis for a vector space $W_s\supseteq\im d.$ We write $\widehat L=L$ and say that $\widehat L$ is formal with respect to $X,$ if there is a vector space $\widehat W_s,$ $\im d\subseteq\widehat W_s\subseteq W$ which is formal with respect to  $B.$
\end{definition}

In the following, given a finitely generated $k$-algebra $A=k\langle x_1,\dots,x_n\rangle/\mathfrak a,$ we will use $\mathfrak m=(x_1,\dots,x_n)\subseteq A,$ and we will always use the name \emph{monomial} for an element $x_1^{m_1}\cdots x_n^{m_n}\in A,\ m_i\in\mathbb N$ (where we assume $0\in\mathbb N$). A monomial basis for some subvector space in $A$ means a basis consisting of monomials. Finally, for $v_1,v_2\in V$ we write $v_1v_2=v_1\cup v_2$ for short.

\begin{definition}\label{Algorithm}

Let $L=(V,W,\cup,d)$ be a GMMP-algebra and assume that $$X=\{v_1,\dots,v_n\}\subseteq V$$ is a linearly independent subset. 

If $L$ is polynomial with respect to $X,$ let $F=k\langle X\rangle.$ Then the set  of monomials in $F$ is a basis of this vector space.   For a monomial $x=x_1^{n_1}x_2^{n_2}\cdots x_r^{n_r}$ where $x_i\in V,n_i\in\mathbb N,\ 1\leq i\leq r$ we let $m(x)=x_1^{n_1}\cup\cdots\cup x_r^{n_1}.$ We use the notation $x^n=\cup_{i=1}^n x.$ Let $W_s\subseteq W$ be the subspace spanned by all $m(x), x\in B, $ and choose a basis $\{y_i\}_{i\in I}$ for $W_s/\im d\cap W_s.$ Let $$f_i=\sum_{x\in B}y_i^\ast(m(x))x\in F, i\in I.$$
We define the algebra of $A(L)$ to be the algebra $F/(f_i;i\in I).$

If $W$ is formal with respect to $X$, let $\hat F=k\langle\langle X\rangle\rangle.$

We use the procedure in Section \ref{formalgsect} to define a formal algebra $\hat{A}(L)$
inductively. Let $$\overline H_0=k,\ \overline H_1=k\langle x_1,\dots,x_n\rangle/\mathfrak m^2,\ \mathfrak m=(x_1,\dots,x_n).$$ Put $\overline B_0=\{1\},\ B_1=\{x_1,\dots,x_n\},\overline B_1=\overline B_0\cup B_1.$ Then $\overline B_0$ is a basis for $\overline H_0,$ $B_1$ is a basis for $\mathfrak m/\mathfrak m^2$ and $\overline B_1$ is a basis for $\overline H_1.$ We let $v_0=1$ and $v_{x_i}=v_i,\ 1\leq i\leq n.$

Put $H_2=k\langle x_1,\dots,x_n\rangle/\mathfrak m^3\overset{\pi_2'}\rightarrow\overline H_1$ and let $B_2'=\{x_ix_j|1\leq i,j\leq n\}.$ Then $B_2'$ is a basis for $\ker\pi_2'/\mathfrak m^3$ and $\overline B_2'=\overline B_1\cup B_2'$ is a basis for $H_2.$ This says that for each monomial $s\in H_2$ we have a unique relation $$s=\sum_{t\in\overline B_2'}\beta_{s,t}t.$$ 

%If there exists a $u\in V$ such that $u\cup v=v\cup u=d(v)$ for all %$v\in V,$ we define $1\cup 1 = d(u).$ Otherwise, extend $W$ to %$W=k\oplus W$ and define $1\cup 1=1\oplus 0.$
 
For each $t\in\overline B_2'$ let 

$$\langle X,t\rangle=\sum_{s\in\overline B_2'}\sum_{\begin{tiny}\begin{matrix}t_1\cdot t_2=s\\t_1,t_2\in \overline B_1\end{matrix}\end{tiny}}\beta_{s,t}v_{t_1}v_{t_2}\in k\oplus W.$$

Let $\{y_i\}_{i\in I}$ be a basis for $W/\im d$ where $I$ is an index set. Let $y_i^\ast$ denote the dual of $y_i$ and consider the following element in $W\otimes H_2:$
$$\begin{aligned}\sum_{n\in\overline B_2'}\langle X,n\rangle\otimes n&=\sum_{n\in\overline B_2'}(\sum_{i\in I}y_i^\ast(\langle X,n\rangle)y_i)\otimes n=\sum_{n\in\overline B_2'}(\sum_{i\in I}y_i^\ast(\langle X,n\rangle)y_i\otimes n)\\
&=\sum_{i\in I}y_i\otimes(\sum_{n\in\overline B_2'}y_i^\ast(\langle X,n\rangle)n).\end{aligned}$$

For each $i\in I$ we put $$f_i^2=\sum_{t\in\overline B_2'}y_i^\ast\langle X,t\rangle t\in H_2.$$ Let $F^2=(f_i^2:i\in I)$ and let $$\overline H_2=H_2/F^2=k\langle x_1,\dots,x_n\rangle/(\mathfrak m^3+F^2)\overset{\pi_2}\rightarrow\overline H_1$$ and choose a basis $B_2\subseteq B_2'$ for $\ker\pi_2.$ Then $\overline B_2=\overline B_1\cup B_2$ is a basis for $\overline H_2$ and we have a unique relation in $\overline H_2.$ For each $s\in \overline H_2,\ s=\sum_{t\in\overline B_2}\beta_{s,t}t.$   The condition in $\overline H_2$ saying that $f_i^2=0,\ i\in I,$ says that for each $u\in B_2$ we can choose an element $v_u\in V$ such that
$$d(v_u)=-\sum_{t\in\overline B_2'}\beta_{t,u}\langle X,t\rangle.$$ 

\vskip0,2cm

For $N\geq 2,$ assume that $\overline H_{N}=k\langle X\rangle/(F^N+\mathfrak m^{N+1})$ has been constructed together with monomial bases $B_{N},\overline B_N.$  Put $$H_{N+1}=k\langle x_1,\dots,x_n\rangle/(\mathfrak m^{N+2}+\mathfrak m F^N+F^N\mathfrak m)\overset{\pi_{N+1}'}\rightarrow\overline H_N$$ and notice that $\ker\pi_{N+1}'$ is spanned by the set of monomials $M=x_i\cdot\overline B_N\cup\overline B_N\cdot x_i,\ 1\leq i\leq n.$ We can write 

$$\begin{aligned}\ker\pi_{N+1}'&=\mathfrak m^{N+1}/(\mathfrak m^{N+2}+\mathfrak m^{N+1}\cap(\mathfrak m F^N+F^N\mathfrak m))\bigoplus F^N/(\mathfrak m F^N+F^N\mathfrak m)\\&=I_{N+1}\oplus F^N/(\mathfrak m F^N+F^N\mathfrak m).\end{aligned}$$

Choose a monomial basis $B_{N+1}'\subseteq M$ for $I_{N+1}.$ By definition the generating polynomials $f_i, i\in I$ are linearly independent such that when $\overline B_{N+1}'=\overline B_N\cup B_{N+1}'$ then $\overline B_{N+1}'\cup \{f_i:i\in I\}$ is a basis for $H_{N+1}.$ This says that every monomial $s\in H_{N+1}$ can be uniquely written 
$$s=\sum_{t\in\overline B_{N+1}'}\beta_{s,t}t+\sum_{i\in I}\beta_{s,i}f_i.$$

For each $t\in B_{N+1}'$ let $$\langle X,t\rangle=\sum_{s\in\overline B_{N+1}'}\sum_{\begin{tiny}\begin{matrix}t_1\cdot t_2=s\\t_1,t_2\in \overline B_N\end{matrix}\end{tiny}}\beta_{s,t}v_{t_1}v_{t_2}\in\oplus_{i=0}^{N}\mathfrak m^i/\mathfrak m^{i+1}\subseteq k\langle S\rangle/\mathfrak m^{N+1}.$$ Then consider the following element in $k\langle X\rangle/\im d\otimes H_{N+1}:$

$$\begin{aligned}\sum_{n\in\overline B_{N+1}'}\langle X,n\rangle\otimes n&=\sum_{n\in\overline B_{N+2}'}(\sum_{i\in I}y_i^\ast(\langle X,n\rangle)y_i)\otimes n=\sum_{n\in\overline B_{N+1}'}(\sum_{i\in I}y_i^\ast(\langle X,n\rangle)y_i\otimes n)\\
&=\sum_{i\in I}y_i\otimes(f_i^N+\sum_{n\in B_{N+1}'}y_i^\ast(\langle X,n\rangle)n).\end{aligned}$$

For each $i\in I$ put $$f_i^{N+1}=f_i^N+\sum_{t\in B_{N+1}'}y_i^\ast(\langle X,t\rangle)t$$ and define $$\overline H_{N+1}=H_{N+1}'/(f_i:i\in I)=k\langle X\rangle/F_{N+1}\overset{\pi_{N+1}}\rightarrow\overline H_N.$$ Choose a monomial basis $B_{N+1}$ for $\ker\pi_{N+1}$ so that $\overline B_{N+1}=B_{N+1}\cup\overline B_N$ is a monomial basis for $\overline H_{N+1}.$ The condition in $\overline H_{N+1}$ saying that $f_i^{N+1}=0,\ i\in I,$ says that for each $u\in B_{N+1}$ we can choose an element $v_u\in V$ such that 
$$d(v_u)=-\sum_{t\in\overline B_2'}\beta_{t,u}\langle X,t\rangle.$$

Now we define $$\hat A(L)=\underset{\underset{n\geq 1}\longleftarrow}\lim\ \overline H_{N+1}.$$

\end{definition}

\begin{lemma} If there exists $n\in\mathbb N$ such that the $v_u$'s chosen above can be chosen such that $v_u$ is zero in all degrees less that $\lceil\frac{n}{2}\rceil,$ then $L\llbracket X\rrbracket$ is algebraic of degree at most $n+1.$
\end{lemma}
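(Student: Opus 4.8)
The plan is to show that the hypothesis forces the algorithm of Definition~\ref{Algorithm} to produce honest polynomials as its relations, of degree at most $n+1$; once this is done, $\hat A(L)=\varprojlim_N\overline H_{N+1}$ is the $\mathfrak m$-adic completion of the polynomial algebra $k\langle X\rangle/(f_i^{n+1};i\in I)$, which is exactly the assertion that $L\llbracket X\rrbracket$ is algebraic of degree at most $n+1$. The reduction to this is purely formal: in the algorithm one has $f_i^{N+1}=f_i^N+\sum_{t\in B_{N+1}'}y_i^\ast(\langle X,t\rangle)\,t$, where the new summand is homogeneous of degree $N+1$ because every $t\in B_{N+1}'$ has degree $N+1$, so the relations stabilise after degree $n+1$ precisely when all these increments with $N+1\ge n+2$ vanish. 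As $\{y_i\}_{i\in I}$ is a basis of $W/\im d$ with the $y_i^\ast$ its coordinate functionals, for a fixed $t$ the vanishing of all such increments is equivalent to the single condition $\langle X,t\rangle\in\im d$. Thus everything comes down to proving
$$\langle X,t\rangle\in\im d\qquad\text{for every }t\in B_m'\text{ with }m\ge n+2.$$

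For the heart of the matter I would first unwind $\langle X,t\rangle$. After resolving the coefficients $\beta_{s,t}$ for basis monomials, $\langle X,t\rangle=\sum v_{t_1}\cup v_{t_2}$, summed over all factorisations $t=t_1t_2$ of the degree-$m$ monomial $t$ into surviving submonomials $t_1,t_2$ of positive degree; so $\langle X,t\rangle$ is assembled entirely from $\cup$-products of the lifts $v_u$ with $|u|<m$, the length-one factors being generators. For every surviving $u$ with $|u|<m$ the defining equation $d(v_u)=-\langle X,u\rangle$ (together with the lower corrections the algorithm records) holds, so all lower obstructions already lie in $\im d$. I would then induct on $m$: associativity of $\cup$, played against these equations, yields a coherence identity that rewrites $\langle X,t\rangle$ modulo $\im d$ as a sum of $\cup$-products $v_u\cup v_{u'}$ with $|u|,|u'|\ge 2$; since by hypothesis $v_u$ and $v_{u'}$ vanish in degrees below $\lceil n/2\rceil$, and since $\langle X,t\rangle$ is constrained to a bounded range of degrees by the construction, this residue should be forced to vanish once $m\ge n+2$.

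The main obstacle is exactly this last step. One must extract the precise coherence identity expressing $\langle X,t\rangle$ for $|t|=m$ through the $\langle X,u\rangle$ with $|u|<m$ — morally just associativity of $\cup$ packaged against $d(v_u)=-\langle X,u\rangle$ — verify that after subtracting everything in $\im d$ the remainder involves only products of two lifts of length $\ge 2$, and then see how the arithmetic of $2\lceil n/2\rceil$ against $n$ (a strict inequality exactly when $n$ is odd) pins the bound at $n+1$ rather than $n$: factorisations with a length-one factor are not suppressed by the hypothesis, and these persist precisely up through degree $n+1$. I would also have to check that the hypothesis propagates through the recursion — that every $v_u$ can be kept in degrees $\ge\lceil n/2\rceil$ at all stages simultaneously — which should follow because the element whose $d$-image must equal $-\langle X,u\rangle$ is itself built only from $\cup$-products of the already-chosen high-degree lifts, so any part of it sitting in degrees below $\lceil n/2\rceil$ automatically lies in $\ker d$ and may be discarded. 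Granting all this, $f_i=f_i^{n+1}$ for every $i$, so $\hat A(L)$ is the $\mathfrak m$-adic completion of $k\langle X\rangle/(f_i^{n+1};i\in I)$ and $L\llbracket X\rrbracket$ is algebraic of degree at most $n+1$.
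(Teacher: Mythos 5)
The paper gives no proof of this lemma at all: it is stated bare, immediately after Definition~\ref{Algorithm}, and neither the notation $L\llbracket X\rrbracket$ nor the phrase ``algebraic of degree at most $n+1$'' is defined anywhere in the text. So there is no argument of the author's to compare yours against; your reconstruction of what the lemma must mean (the relations stabilise, $f_i=f_i^{n+1}$, so $\hat A(L)$ is the completion of an algebra presented by relations of degree $\le n+1$) is a reasonable reading, and your first reduction is correct: since $\{y_i\}_{i\in I}$ is a basis of $W/\im d$, the increment $f_i^{N+1}-f_i^N$ vanishes for all $i$ exactly when $\langle X,t\rangle\in\im d$ for every $t\in B_{N+1}'$, so the whole lemma does come down to killing the obstructions $\langle X,t\rangle$ in degrees $\ge n+2$.

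The difficulty is that everything after that reduction is deferred, and it is precisely where the proof would live or die. Three concrete problems. First, you invoke ``associativity of $\cup$'' to produce the coherence identity, but a GMMP-algebra only supplies a bilinear map $\cup:V\otimes_kV\to W$ with no associativity axiom (indeed $\cup$ lands in $W$, not back in $V$, so iterated products are not even defined by the given data); whatever identity controls $\langle X,t\rangle$ modulo $\im d$ has to come from the inductive equations $d(v_u)=-\sum_t\beta_{t,u}\langle X,t\rangle$ alone, and you have not extracted it. Second, as you yourself note, factorisations $t=t_1t_2$ with $|t_1|=1$ contribute terms $v_{t_1}\cup v_{t_2}$ in which $v_{t_1}$ is a generator and is untouched by the hypothesis; you assert these ``persist precisely up through degree $n+1$'' but give no mechanism that extinguishes them afterwards, and without one the bound $n+1$ is not established. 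Third, the proposal slides between two incompatible readings of ``degree'': the arithmetic $2\lceil n/2\rceil\ge n$ requires a grading on $V$ in which the components of $v_u$ can vanish below degree $\lceil n/2\rceil$, while ``products of two lifts of length $\ge 2$'' grades by the length of the indexing monomial $u$; the lemma's hypothesis must be pinned to one of these before the degree count can be carried out. The skeleton is right, but the step you flag as ``the main obstacle'' is the entire content of the lemma, and it remains unproved.
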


\begin{theorem}  There is a fully faithful functor between the category of $k$-algebras with countable generator set $X$ and the category of GMMP-algebras with linearly independent set $X,$
$\hat{L}:\cat{Alg}_k(X)\rightarrow\cat{GMMP}_k(X),$ with a left inverse $$\hat A_X:\cat{GMMP}_k(X)\rightarrow\cat{Alg}_k(X),$$ i.e. such that $\hat A_X(\hat L_X(\hat A_X))\simeq\hat A.$
\end{theorem}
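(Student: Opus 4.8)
The plan is to exhibit the two functors explicitly and then check that the composite is naturally isomorphic to the identity on the algebra side, leaving the reverse composite to be (at best) a natural transformation --- which is exactly the asymmetry encoded in the phrase ``left inverse''. First I would define $\hat L_X$ on objects: given $A\in\cat{Alg}_k(X)$ with the chosen admissible ordering on the monomial basis $B$ of $k\langle\langle X\rangle\rangle$, take the relation morphism $d:k\langle\langle X\rangle\rangle\rightarrow k\langle\langle X\rangle\rangle$ from Definition \ref{relmorphdef} (extended to the countably-generated/formal setting as in Sections \ref{AlgebrasSection} and \ref{formalgsect}), set $V=k\langle\langle X\rangle\rangle$, $W=k\langle\langle X\rangle\rangle$ (or the subspace $W_s$ spanned by the monomials $m(x)$, together with $\im d$), let $\cup$ be the restriction of multiplication $V\otimes_k V\rightarrow W$ to monomials, and put $\hat L_X(A)=(V,W,\cup,d)$. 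One checks this is a GMMP-algebra that is formal (or polynomial) with respect to $X$ in the sense of Section \ref{GMMPDef}. On morphisms, a $k$-algebra map $\phi:A\rightarrow A'$ fixing the generators lifts to $k\langle\langle X\rangle\rangle$ and induces compatible $(\phi,\psi)$; functoriality is routine once one checks that the lift respects the admissible ordering up to the change-of-basis discussed after Lemma \ref{rellemma}.

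Next I would recall that $\hat A_X$ is already defined --- it is the inductive construction $\hat A(L)=\varprojlim\overline H_{N+1}$ of Definition \ref{Algorithm}, which only uses the data $(V,W,\cup,d)$ and the set $X$. So the real content is the identity $\hat A_X(\hat L_X(A))\simeq A$, and I would prove it degree by degree using the filtration by powers of $\mathfrak m$. Concretely: for $L=\hat L_X(A)$, compare the tower $\overline H_N$ produced by the algorithm with the tower $A/\sigma(\mathfrak m^{N+1})$. The base case $\overline H_1\simeq A/\sigma(\mathfrak m^2)$ is minimality of $X$, exactly as in Section \ref{formalgsect}. For the inductive step, the key observation is that the polynomials $f_i^{N+1}$ built in the algorithm from the bracket $\langle X,t\rangle$ and the duals $y_i^\ast$ are precisely the degree-$\le N+1$ truncations of the defining relations $f_i=\sum_{x\in B}y_i^\ast(m(x))x$ of $A$ as a quotient $k\langle\langle X\rangle\rangle/F$ (Lemma \ref{rellemma}). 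Hence $\overline H_{N+1}=k\langle\langle X\rangle\rangle/(F^{N+1}+\mathfrak m^{N+2})\simeq A/\sigma(\mathfrak m^{N+2})$ by the dimension-count argument already invoked for $\sigma_{N+1}$; passing to the inverse limit gives $\hat A_X(L)\simeq\hat A=A$ (the completion being $A$ itself since $A\in\cat{Alg}_k(X)$ is taken formal). Naturality in $A$ follows because every step of the algorithm is functorial in $(V,W,\cup,d)$, and $\hat L_X$ is functorial by the previous paragraph.

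Finally, full faithfulness of $\hat L_X$: faithfulness is immediate because $\hat A_X\circ\hat L_X\simeq\id$ forces $\hat L_X$ to be injective on hom-sets. For fullness, given a GMMP-morphism $(\phi,\psi):\hat L_X(A)\rightarrow\hat L_X(A')$, the component $\phi:V\rightarrow V'$ restricted to $X$ lands in the generators of $A'$ and, because $\phi$ intertwines $\cup$ and $\cup'$ (first diagram) and $d$ and $d'$ (second diagram), it sends the relation subspace of $A$ into that of $A'$; thus $\phi$ descends to a $k$-algebra homomorphism $\bar\phi:A\rightarrow A'$ with $\hat L_X(\bar\phi)=(\phi,\psi)$. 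One must check that $\psi$ is then forced --- it is, since $W$, $W'$ are spanned by monomials in $X$, $X'$ and $\psi$ is determined on monomials by compatibility with $\cup$.

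The main obstacle I expect is the inductive identification of the algorithm's output with the intrinsic presentation $A\simeq k\langle\langle X\rangle\rangle/F$ when $X$ is merely countable and $W$ is only \emph{formal} (not polynomial): there one is manipulating infinite products, the relation morphism $d$ is only $k$-linear (not an algebra map, as the excerpt stresses), and the ``unique relation $s=\sum\beta_{s,t}t$'' expansions become infinite sums whose convergence in the $\mathfrak m$-adic topology must be justified term by term. Keeping the bookkeeping of the nested bases $\overline B_N\subseteq\overline B_{N+1}'$ consistent across the limit --- so that the chosen lifts $v_u$ assemble into a well-defined element of $k\langle\langle X\rangle\rangle$ --- is the delicate point; everything else is the dimension-counting and diagram-chasing already rehearsed in Sections \ref{formalgsect} and \ref{GMMPDef}.
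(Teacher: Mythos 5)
Your proposal is correct in outline and follows essentially the same route as the paper, whose own proof is a two-sentence sketch stating that the construction of Section~\ref{AlgebrasSection} (the relation morphism $d$ together with multiplication as $\cup$) yields the GMMP-algebra of an algebra with generating set $S$, and that the algorithm of Definition~\ref{Algorithm} gives the reverse direction. Your degree-by-degree identification of the $f_i^{N+1}$ with truncations of the defining relations, and your treatment of full faithfulness, merely supply details that the paper omits entirely.
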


\begin{proof} The construction in Section \ref{AlgebrasSection} gives the GMMP-algebra with $S\subseteq A=V$ and $W=k\langle S\rangle.$ This also give the reverse equivalence.
\end{proof}

\begin{remark}\label{uniquerem} Note that $\hat A(\hat L)$ is unique up to non-unique isomorphism. This is due to the choices of monomial bases at each stage.
\end{remark}

\begin{example} We give a polynomial GMMP-algebra $L=(V,W,\cup,d)$ with linearly independent set $v_1,v_2$ by the following.

$$\left(\begin{matrix}
v_0&v_1&v_2&v_3&v_4
\\1&x&y&x^2&xy\\
v_0+v_3+v_6&v_1+v_7+v_{10}&v_2+v_8+v_{14}&0&v_4\end{matrix}\right.
$$

$$\left.\begin{matrix}
v_5&v_6&v_7&v_8&v_9&v_{10}&v_{11}&v_{12}&v_{13}&v_{14}
\\yx&y^2&x^3&x^2y&xyx&xy^2&yx^2&yxy&y^2x&y^3\\
v_5&0&0&v_8+v_{11}&0&v_{10}+v_{13}&0&0&0&0\end{matrix}\right).
$$
 
The matrix has the following interpretation: The vector space $V$ is infinite dimensional, with linearly independent subset $X=\{x,y\}.$ The vector space $W$ is equal to $V$ and both have basis $\{1,x,y,x^2,xy,yx,y^2,..\}$ and further on, where $mn=m\cup n$ for $m,n\in V=W.$ This gives the full description of a GMMP-algebra, and by Definition \ref{Algorithm} the computation amounts to compute the relation morphism $d_{WW},$ choose a basis $\{y_i\}_{i\in\mathbb N}$ for $W/\im d_{WW}$ and then put $$A(L)=k\langle x,y\rangle/(f_i)_{i\in\mathbb N}$$ where $f_i=\sum_{\underline m}y_i^\ast(\underline m)\underline m$ where $\underline m$ runs through all the monomials in $x,y.$ In this example we find $$A(L)=k\langle x,y\rangle/(1-x^2-y^2).$$
\end{example}

\section{Completions of Associative Algebras}
In commutative algebra, we use the topological concept of completions to form the $\mathfrak a$-adic completion $\hat A^{\mathfrak a}$ of a commutative ring in an ideal $\mathfrak a\subset A.$ We observe that when we take the completion in a maximal ideal $\mathfrak m\subset A$ we obtain a local ring $\hat A$ which by the universal property of localization contains the image of the local ring $A_{\mathfrak m}.$ This also works for completions in prime ideals $\mathfrak p\subset A,$ this is because $A_{\mathfrak p}$ is a local ring. Thus $A_f\hookrightarrow\hat A_{\mathfrak p}$ for each  $f\notin\mathfrak p$ and so $A_{\mathfrak p}\hookrightarrow\hat A_{\mathfrak p}.$ For our constructions it is sufficient to consider completion in simple modules.

Let $A$ be an associative $k$-algebra, let $M=\{M_i\}_{i=1}^r$ be a set of $r$ simple right $A$-modules.Put $$V=(\hmm_k(A,\hmm_k(V_i,V_j))),\ W=(\hmm_k(A^{\otimes 2},\hmm_k(V_i,V_j))).$$ Let $d:V\rightarrow W$ be given by $d(\phi)(a_1\otimes a_2)=a_1\phi(a_2)-\phi(a_1)a_2,$ and $\cup:V\otimes_k V\rightarrow W$ by $\phi\cup\psi(a\otimes b)=\phi(a)\circ\phi(b).$ Then $L=(V,W,\cup,d)$ is a GMMP-algebra, and it is proved in \cite{ELS17} that if we let $X=(\{x_{ij}(l)\}_{1\leq l\leq\l_{ij}})_{1\leq i,j\leq r}\subset V$ be representatives for a basis for $V/\ker d,$ then the formal GMMP-algebra generated by $X.$

\begin{theorem}
With the notation above, let $$\hat H(M)=\hat A(X\subset V,W,\cup,d)$$ and put $\hat A_M=\enm_k(\hat H(M)\otimes_{k^r}\oplus_{i=1}^r M).$ Then there exists an algebra homomorphism $\iota:A\rightarrow\hat A_M$ commuting in the diagram 
$$\xymatrix{A\ar[r]^-\iota\ar[dr]_{\oplus\rho_i}&\hat A_M\ar[d]^\delta\\
&\oplus_{i=1}^r\enm(M_i)}
$$ where $\rho_i:A\rightarrow\enm_k(M_i)$ is the structure morphism and $\delta$ is the diagonal.
\end{theorem}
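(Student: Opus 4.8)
The plan is to construct $\iota$ degree by degree using the cohomological interpretation of the GMMP-algebra $\hat H(M)$, and then to verify that the resulting map is an algebra homomorphism and that composing with $\delta$ recovers $\oplus\rho_i$. Recall that $\hat A_M=\enm_k(\hat H(M)\otimes_{k^r}\oplus_i M_i)$, and since $\hat H(M)=\varprojlim \overline H_{N+1}$ with $\overline H_1=k^r\oplus\ext^1_A(M,M)$, we have a canonical augmentation $\pi:\hat H(M)\to k^r$, whence a diagonal action $\delta:\hat A_M\to\enm_k(\oplus_i M_i)=\oplus_i\enm_k(M_i)$ obtained by reducing modulo $\ker\pi$ and using that $M_i$ is the $i$-th component. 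So what must be produced is a lift of $\oplus\rho_i:A\to\oplus_i\enm_k(M_i)$ through $\delta$ that is multiplicative.

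First I would recall from \cite{ELS17} the precise shape of the complex $C^\bullet(A,M)$: one has $C^0(A,M)=\oplus_i\enm_k(M_i)$, $C^1(A,M)=\oplus_{i,j}\Hom_k(M_i,M_j)\otimes$ (something built from $A$), more precisely the Hochschild-type cochains, with $C^1$ containing the derivations-with-values representing $\ext^1_A(M_i,M_j)$, and the differential $d$ and cup product $\cup$ are the standard ones so that a Maurer--Cartan-type element encodes an $A$-module structure on $\hat H(M)\otimes_{k^r}\oplus M_i$ deforming $\oplus M_i$. The point of the construction in Definition~\ref{Algorithm} is exactly that the obstruction calculus built from $(V,W,\cup,d)=(C^1,C^2,\cup,d)$ produces, at each finite stage $\overline H_{N+1}$, a well-defined lifting of the module structure, i.e. a compatible family of algebra maps $A\to\enm_{\overline H_{N+1}}(\overline H_{N+1}\otimes_{k^r}\oplus M_i)$ refining the previous stage. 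Passing to the inverse limit over $N$ gives $\iota:A\to\enm_{\hat H(M)}(\hat H(M)\otimes_{k^r}\oplus M_i)\hookrightarrow\enm_k(\hat H(M)\otimes_{k^r}\oplus M_i)=\hat A_M$.

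The key steps, in order, are: (1) identify $\delta$ explicitly as reduction mod $\ker\pi$ followed by restriction to the degree-zero part $k^r\otimes_{k^r}\oplus M_i\cong\oplus M_i$; (2) check that the stage-one lift $A\to\enm_{\overline H_1}(\overline H_1\otimes_{k^r}\oplus M_i)$ is literally $\oplus\rho_i$ twisted by the universal extension class $\sum x_{ij}(l)$, so that $\delta\circ(\text{stage 1})=\oplus\rho_i$; (3) show inductively that the stage-$(N{+}1)$ lift exists \emph{because} the $\langle X,t\rangle$ terms and the relations $d(v_u)=-\sum\beta_{t,u}\langle X,t\rangle$ in Definition~\ref{Algorithm} are exactly the obstruction cocycles vanishing, i.e. the algorithm is engineered to make the associativity/module-axiom obstruction a coboundary at each step; (4) check the lifts are compatible under $\overline H_{N+1}\to\overline H_N$, so the limit exists; (5) observe that each stage map reduces to the previous one and in particular to $\oplus\rho_i$ in degree zero, giving commutativity of the triangle; (6) confirm $\iota$ is a ring homomorphism, which is automatic since each stage map is.

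The main obstacle I expect is step (3): making rigorous the claim that the combinatorial data $\langle X,t\rangle$, $f_i^N$, $v_u$ produced by the algorithm in Definition~\ref{Algorithm} coincide with the genuine Hochschild obstruction classes for lifting the $A$-module structure from $\overline H_N\otimes_{k^r}\oplus M_i$ to $\overline H_{N+1}\otimes_{k^r}\oplus M_i$. This is precisely where the cup product $\cup:C^1\otimes C^1\to C^2$ and differential $d:C^1\to C^2$ enter: the element $\sum_{t}\langle X,t\rangle\otimes t$ is the cup-square of the partial lift, its image in $W/\im d$ must vanish for a lift to exist, and the chosen $v_u$ with $d(v_u)=-\sum\beta_{t,u}\langle X,t\rangle$ is exactly the correction cochain. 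One must verify the bookkeeping matches the standard deformation-theoretic obstruction sequence $H^2\supseteq[\text{ob}]$; granting the formalism of \cite{ELS17,Siqveland2301}, this is a translation rather than a new difficulty, but it is the heart of the argument. A secondary, purely technical point is convergence: one needs that for each fixed $m\in\oplus M_i$ and each $a\in A$ the images of $a\cdot m$ stabilize in the pro-system, which follows since $\overline H_{N+1}\otimes_{k^r}\oplus M_i$ is finite-dimensional in each degree and $\hat H(M)$ is the genuine inverse limit, so $\hat A_M=\varprojlim\enm_k(\overline H_{N+1}\otimes_{k^r}\oplus M_i)$ and the stage maps assemble.
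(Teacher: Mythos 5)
Your proposal takes essentially the same route as the paper: the paper's entire proof is the remark that the statement ``can be proven directly by working in the Hochschild complex'' and is then proved in the reference \cite{ELS17}, and your plan --- identifying the GMMP data $(C^1,C^2,\cup,d)$ with the Hochschild obstruction calculus, lifting the module structure stage by stage through the $\overline H_{N+1}$, and passing to the inverse limit --- is exactly a fleshed-out version of that argument, with the same reliance on \cite{ELS17} at the key step. You correctly isolate the one nontrivial point (matching the algorithm's $\langle X,t\rangle$ and $v_u$ with the genuine obstruction cocycles and correction cochains), which is precisely what the cited reference supplies.
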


\begin{proof} This can be proven directly by working in the Hochschild complex, and then it is proved in \cite{ELS17}.
\end{proof}

\begin{definition} Let $A$ be an associative algebra and let $M=\{M_1,\dots,M_r\}$ be a set of $r$ simple right $A$-modules. Then the completion of $A$ in $M$ is defined as $$\hat A_M=\enm_k(\hat H(M)\otimes_{k^r}\oplus_{i=1}^r M),$$ where
$\hat H(M)=\hat A(X\subset C^1(A,M),C^2(A,M),\cup,d).$
\end{definition}

Notice that by Remark \ref{uniquerem}, $\hat H(M),$ and thereby $\hat A_M$ is determined upto (non-unique)isomorphism.

\begin{proposition} If $A$ is a commutative $k$-algebra and $\mathfrak m\subset A$ a maximal ideal, let $M=A/\mathfrak m.$Then we have that $\hat A_{M}\simeq \hat A_{\mathfrak m}.$
\end{proposition}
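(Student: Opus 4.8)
The plan is to reduce the statement to an identification of the GMMP-algebra $\hat H(M)$ and then to recognise both $\hat H(M)$ and $\hat A_{\mathfrak m}$ as the prorepresenting hull of one and the same deformation functor. First I would observe that here $r=1$, and, under the standing hypothesis $A/\mathfrak m\simeq k$ (so that $k^r=k$ and $M\simeq k$ as a $k$-vector space), the module $\hat H(M)\otimes_{k^r}M$ is free of rank one over $\hat H(M)$; unwinding the definition of $\hat A_M$ then identifies it with the $\hat H(M)$-endomorphism ring of that module, which is $\hat H(M)$ itself once $\hat H(M)$ is known to be commutative. Thus it suffices to prove $\hat H(M)\simeq\hat A_{\mathfrak m}$ as complete local $k$-algebras, the commutativity of $\hat H(M)$ coming out as a by-product.

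Next I would spell out the relevant deformation functor $\mathrm{Def}_M\colon\mathrm{Art}_k\to\mathrm{Sets}$ of $M$ as a right $A$-module. On the one side, $\hat H(M)=\hat A(X\subset C^1(A,M),C^2(A,M),\cup,d)$ is produced by the algorithm of Definition \ref{Algorithm} from the obstruction complex of \cite{ELS17}, whose first cohomology is $\mathrm{Ext}^1_A(M,M)$, and — as announced in the abstract — the output of that algorithm is a prorepresenting hull of $\mathrm{Def}_M$ (which for $r=1$ is just the classical functor of flat deformations of $M$ over Artinian local $k$-algebras). On the other side, for an Artinian local $(R,\mathfrak m_R)$ with $R/\mathfrak m_R=k$, a deformation of $M=A/\mathfrak m$ over $R$ has underlying $R$-module $R$ (rank-one free, by flatness together with $M_R\otimes_R k\simeq M$), hence is nothing but a $k$-algebra homomorphism $A\to\mathrm{End}_R(R)=R$ lifting $A\to A/\mathfrak m=k$, i.e. a local homomorphism $A_{\mathfrak m}\to R$; equivalences of deformations are conjugations by $\mathrm{Aut}_R(R)=R^{\times}$, which act trivially since $R$ is commutative. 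Therefore $\mathrm{Def}_M(R)=\mathrm{Hom}^{\mathrm{loc}}_{k\text{-}\mathrm{alg}}(A_{\mathfrak m},R)=\mathrm{Hom}^{\mathrm{cont}}_{k\text{-}\mathrm{alg}}(\hat A_{\mathfrak m},R)$ naturally in $R$, so $\mathrm{Def}_M$ is prorepresented by $\hat A_{\mathfrak m}$.

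Finally I would invoke uniqueness of the prorepresenting hull up to (non-canonical) isomorphism to conclude $\hat H(M)\simeq\hat A_{\mathfrak m}$; since $\hat A_{\mathfrak m}$ is commutative this also validates $\hat A_M\simeq\hat H(M)$, and the proposition follows. Alternatively one could bypass deformation theory and argue directly: compute $C^\bullet(A,M)$ from a free resolution of $k$ over $A$, check that the relations $f_i^{N}$ produced at stage $N$ of Definition \ref{Algorithm} match, degree by degree, a generating set of the kernel of $k\langle\langle x_1,\dots,x_n\rangle\rangle\twoheadrightarrow\hat A_{\mathfrak m}$ with $n=\dim_k\mathfrak m/\mathfrak m^2$, and pass to the inverse limit.

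I expect the main obstacle to be the first input of the middle paragraph: that the algorithm of Definition \ref{Algorithm}, applied to the obstruction GMMP-algebra $(C^1(A,M),C^2(A,M),\cup,d)$, reproduces the genuine prorepresenting hull of $\mathrm{Def}_M$, and not merely some formal algebra with the same tangent and obstruction spaces — that is, that the higher Massey-type products implicit in the choices of the $v_u$'s are the deformation-theoretic obstructions. This is exactly where \cite{ELS17} has to be cited (or the comparison redone inside the Hochschild complex); everything else is the routine translation of ``deformations of $A/\mathfrak m$'' into ``continuous local homomorphisms out of $\hat A_{\mathfrak m}$'', where the cyclicity of $M$ removes the usual automorphism subtleties.
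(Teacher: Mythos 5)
Your argument is correct in outline, but it takes a genuinely different route from the paper. The paper's proof is a direct, one-step identification: for $M=A/\mathfrak m$ the chosen representatives $X$ of a basis of $\mathrm{Ext}^1_A(M,M)\simeq(\mathfrak m/\mathfrak m^2)^\ast$ correspond to a minimal generating set of $\mathfrak m$, and the algorithm of Definition \ref{Algorithm} then manufactures exactly the power series algebra in those variables modulo the induced relations, which is the classical $\mathfrak m$-adic completion; no deformation theory is invoked at this point. You instead characterize both sides as (pro)representing objects of the deformation functor $\defm M$: you use Proposition \ref{prorepprop} (that $\hat A(L(M))$ is a hull of $\defm M$, which the paper only states in the later Section \ref{defsection} and itself delegates to \cite{ELS17}), you verify by the standard Schlessinger-type computation that for a cyclic module $M=A/\mathfrak m$ the functor $\defm M$ is honestly prorepresented by $\hat A_{\mathfrak m}$ (flatness forces $M_R\simeq R$, the module structure is a local homomorphism $A_{\mathfrak m}\rightarrow R$, and $R^\times$ acts trivially by commutativity), and you conclude by uniqueness of the hull. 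Your route is logically heavier --- it makes the proposition depend on the full comparison between the GMMP algorithm and the Massey-product construction of the hull in \cite{ELS17}, precisely the obstacle you flag --- but it buys a conceptual explanation and generalizes verbatim to several simple modules, whereas the paper's argument is elementary and self-contained but specific to the commutative, single-maximal-ideal case. Your preliminary reduction $\hat A_M\simeq\hat H(M)$ for $r=1$ (identifying the endomorphism ring of the rank-one free module with the ring itself) is a useful clarification that the paper leaves implicit, and your ``alternative'' direct argument in the last paragraph is essentially the paper's actual proof.
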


\begin{proof} This follows directly from the fact that the representatives $X$ is a generating set of the maximal ideal $\mathfrak m\subset A,$ and so the algebra of the GMMP-algebra is formed by all power-series in variables from $X$ which coincides with the completion in the commutative situation.
\end{proof}

\section{Deformations of modules}\label{defsection}

Let $A$ be an associative $k$-algebra, and let $M=\{M_1,\dots, M_r\}$ be a set of right $A$-modules. We let $\cat{Art}_k$ be the category of $r$-pointed $k$-algebras, the objects being $k$-algebras fitting in the diagram 
$$\xymatrix{k^r\ar[r]^\iota\ar[dr]_\id&A\ar[d]^\rho\\&k^r,}$$
and such that $\ker^n\rho=(\ker\rho)^n=0$ for some $n\geq 1.$ The morphisms are algebra homomorphisms commuting with $\iota$ and $\rho.$ The category of formal $r$-pointed $k$-algebras $\widehat{\cat{Art}}_k$ consists of $r$-pointed algebras $\hat A$ such that for all $n\geq 0,\ A/\ker^n\rho$ is an object in $\cat{Art}_k.$ This says that the objects in $\widehat{\cat{Art}}_k$ are projective limits of objects in $\cat{Art}_k.$

\begin{definition} A deformation of the set $M=\{M_1,\dots, M_r\}$ to the formal $r$-pointed algebra $S$ is an $S\otimes_{k^r}A$-module $M_S$ such that $M_S\simeq S\otimes_{k^r}(\oplus_{i=1}^rM_i)$ as $S$-module (which is proved to be equivalent to $M_S$ being $S$-flat in \cite{ELS17}), and such that $k^r\otimes_S M_S\simeq \oplus_{i=1}^rM_i.$ This defines the (covariant) noncommutative deformation functor
$$\defm M:\widehat{\cat{Art}}_k\rightarrow\cat{Sets}$$
\end{definition}

\begin{proposition}\label{prorepprop}
$\hat A_M$ is a prorepresenting hull for the deformation functor $\defm M.$
\end{proposition}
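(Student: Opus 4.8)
The plan is first to show that $\hat H(M)$ is a prorepresenting hull of $\defm M$ in Laudal's sense, and then to transport this property across the matrix/endomorphism construction $\hat A_M=\enm_k(\hat H(M)\otimes_{k^r}\oplus_{i=1}^rM_i)$, using the $r$-pointed structure $k^r\to\hat A_M\to k^r$ and the algebra morphism $\iota:A\to\hat A_M$ of the preceding theorem. For the first point I would invoke the noncommutative analogue of Schlessinger's theorem: a covariant functor $F:\widehat{\cat{Art}}_k\to\cat{Sets}$ with $F(k^r)$ a one-point set admits a prorepresenting hull provided it is continuous, takes the small surjections in $\cat{Art}_k$ to surjections with the usual fibre-product lifting property, and has a finite-dimensional tangent space. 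For $F=\defm M$ these are the standard properties of the module deformation functor established in \cite{ELS17}; its tangent space is $\ext^1_A(M,M)=(\ext^1_A(M_i,M_j))_{1\le i,j\le r}$, finite-dimensional precisely because the chosen set $X$ of representatives is finite. Hence a hull $H$ exists, with tangent space $\ext^1_A(M,M)$ and an obstruction theory valued in $\ext^2_A(M,M)=H^2(C^\bullet(A,M))$.

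\emph{Identification of the hull with $\hat H(M)$.} The heart of the proof is to show that the recursion of Definition \ref{Algorithm}, run on the GMMP-algebra $(C^1(A,M),C^2(A,M),\cup,d)$ with the linearly independent set $X$, reproduces term by term Laudal's inductive construction of $H$ by matric Massey products. Concretely I would prove, by induction on $N$: that $\overline H_N\in\cat{Art}_k$ carries a versal element of $\defm M(\overline H_N)$; that $\langle X,t\rangle\in k\oplus W$ represents the matric Massey product of the $\ext^1$-cocycles determined by the factorizations $t=t_1t_2$, formed relative to the defining system recorded by the lifts $v_u$ chosen at earlier stages; that the existence of $v_u\in V$ with $d(v_u)=-\sum_t\beta_{t,u}\langle X,t\rangle$ is exactly the vanishing in $\ext^2_A(M,M)$ of the class of the degree-$(N{+}1)$ obstruction cocycle, hence exactly what the relations $f_i^{N+1}=0$ impose; and that consequently $\overline H_{N+1}$ is the matching truncation of $H$. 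Taking the inverse limit yields $\hat H(M)=\varprojlim_N\overline H_N\cong H$. By Remark \ref{uniquerem} this isomorphism is non-canonical, which is harmless since a hull is itself only unique up to non-canonical isomorphism.

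\emph{Transport to $\hat A_M$.} The versal family over $H=\hat H(M)$ is the $\hat H(M)\otimes_{k^r}A$-module structure on $\hat H(M)\otimes_{k^r}(\oplus_iM_i)$, equivalently the algebra map $A\to\enm_k(\hat H(M)\otimes_{k^r}\oplus_iM_i)=\hat A_M$ of the preceding theorem. Since the matrix/endomorphism passage is functorial over $k^r$ and $\hat A_M\in\widehat{\cat{Art}}_k$, a morphism $\hat A_M\to S$ in $\widehat{\cat{Art}}_k$ amounts to a morphism $\hat H(M)\to S$, and pulling back the versal family along it gives a deformation of $M$ to $S$; conversely every deformation to $S$ so arises, up to the usual indeterminacy, and the induced tangent map is the identity on $\ext^1_A(M,M)$. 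This is the statement that $\hat A_M$ versally prorepresents $\defm M$ with bijective tangent map, i.e.\ is a prorepresenting hull.

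\emph{Main obstacle.} The delicate point is the identification in the second paragraph: showing that the purely formal bookkeeping of the algorithm in Definition \ref{Algorithm} --- with its arbitrary choices of monomial bases $B_N\subseteq B_N'$ and of lifts $v_u$ --- genuinely computes the \emph{cohomological} obstruction in $\ext^2_A(M,M)$ rather than merely a cochain-level expression. Granting from \cite{ELS17} the complex $C^\bullet(A,M)$ with its cup product and the identification $H^2=\ext^2_A(M,M)$, this reduces to a degreewise diagram chase verifying that $\sum_u\langle X,u\rangle\otimes u$ is a cocycle whose class is independent of the auxiliary choices and whose vanishing is equivalent both to the existence of the next lift and to the imposition of $f_i^{N+1}$; that chase is where essentially all the content resides.
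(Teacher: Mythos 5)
Your plan follows essentially the same route as the paper: the paper's entire proof is a one-line citation to \cite{ELS17}, where the hull is produced by exactly the obstruction calculus you outline --- existence via the noncommutative Schlessinger criterion, tangent space $\ext^1_A(M,M)$, and identification of the hull with the formal $r$-pointed matrix algebra cut out by the matric Massey product relations computed in $C^\bullet(A,M)$ --- so your second paragraph is a faithful expansion of what the paper delegates to the reference (the paper's remark about using \emph{two} complexes refers to comparing a Hochschild-type complex with a resolution-based one in order to identify $H^1,H^2$ with $\ext^1,\ext^2$, which you simply assume as given). The one step that the citation does not cover, and which your plan must therefore supply, is the passage from $\hat H(M)$ to $\hat A_M$, and this is where your argument is loosest: as literally defined, $\hat A_M=\enm_k(\hat H(M)\otimes_{k^r}\oplus_{i=1}^rM_i)$ is the full $k$-linear endomorphism algebra of an infinite-dimensional vector space, which is not an object of $\widehat{\cat{Art}}_k$, and a morphism $\hat A_M\rightarrow S$ does not reduce to a morphism $\hat H(M)\rightarrow S$. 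Your transport step only works if the endomorphisms are taken to be $\hat H(M)$-linear (as the diagram in the paper's introduction suggests), in which case one obtains a matrix algebra Morita-tied to $\hat H(M)$ and the hull property does transfer; you should either make that reading explicit or state and prove the proposition for $\hat H(M)$ itself, which is what \cite{ELS17} actually establishes.
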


\begin{proof} This is proved by direct calculation in \cite{ELS17} by using two different complexes
$(C^\cdot(A,M),d)$ computing $(\ext^i_A(M_i,M_j))_{1\leq i,j\leq r}.$
\end{proof}

\subsection{Deformation of GMMP-algebras}
Let $L=(X\subseteq V,W,\cup,d)$ be a GMMP algebra, and assume that $X$ is finite.  The data of a GMMP-algebra consists of a linear homomorphism $\cup:V\otimes_k V\rightarrow W,$ and a linear homomorphism $d:V\rightarrow W.$ We assume that we can choose ordered bases $B_V,V_W$ for $V,W$ respectively, and with this choice of bases the GMMP-data are determined by their matrices with respect to the chosen bases. This says that the pair of (possibly infinite dimensional) matrices $$(\cup,d)\in M(B_V\times B_V, B_W)\times M(B_V,B_W)$$ represents a GMMP-algebra.This is a point in $$k^{((B_V\times B_V)B_W+B_VB_W)}=k^B$$ which corresponds to a closed point in $\mathbb A_k^B.$ Different choices of bases results in different points representing isomorphic GMMP-algebras. Thus the orbits under the natural action of $G=\gl(B_V)\times\gl(B_W)$ on $\mathbb A_k^B$ are the isomorphism classes of the GMMP-algebras. Thus we have a commutative ring $A$ on which the abelian group $G$ acts, and each isomorphism class of a GMMP-algebra corresponds to a $A[G]$-module where $A[G]$ is the skew polynomial algebra. Then we can deform any GMMP-algebra as an $A[G]$-module. This is described in the book \cite{S23}.

\section{Deformation of Algebras}

A deformation of any object in any category is a particular kind of lifting of the object in a moduli. The form of the deformation, e.g. the flatness in the situation of modules, is a necessity for existence of a particular kind of moduli.

Given an associative algebra $A.$ Then we can prove that the deformations of $L(A)$ stays in the class of moduli of algebras, that is, if $L_H$ is a deformation of $L(A)$ to $H$, then $L_H=L(A')$ for some algebra $A'$ and thus $A'$ is a deformation of $A.$ 

\begin{lemma} A deformation of $L(A)$ to $S\in\widehat{\cat{Art}}_k$ is a on the form $L(A')$ with $A'$ a deformation of $A$ to $S.$ 
\end{lemma}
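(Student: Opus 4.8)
The plan is to unwind both sides of the claimed equality using the explicit description of the functor $\hat A$ from Section \ref{GMMPDef} and the construction $L(-)$ from Section \ref{AlgebrasSection}. First I would recall that for an algebra $A$ with countable generating set $X$, the GMMP-algebra $L(A)$ has $V = A$ (or rather the span of monomials), $W = k\langle X\rangle$, with $\cup$ the multiplication map on monomials and $d$ the relation morphism of Definition \ref{relmorphdef} coming from the presentation $A \simeq k\langle X\rangle/F$. Then a deformation of $L(A)$ to $S\in\widehat{\cat{Art}}_k$ is, by the discussion in Section \ref{defsection}, a flat lift of the pair of matrices $(\cup, d)$ over $S$ compatible with the $k^r$-structure; concretely it is a GMMP-algebra $L_S = (X\subseteq V_S, W_S, \cup_S, d_S)$ with $V_S \simeq S\otimes_{k^r} V$, $W_S \simeq S\otimes_{k^r} W$ as $S$-modules, reducing to $L(A)$ modulo $\ker\rho$.

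The core of the argument is then to show that any such deformed GMMP-algebra is \emph{polynomial (resp. formal) with respect to $X$} in the sense of Section \ref{GMMPDef}, so that the algorithm of Definition \ref{Algorithm} applies and produces an algebra $A' = \hat A(L_S)$. The key point is that flatness of $W_S$ over $S$ forces the monomials $B = \{x_{i_1}\cup\cdots\cup x_{i_l}\}$ to remain a basis of the relevant subspace $W_{S,s}\subseteq W_S$ after base change (a basis lifts to a basis under a flat, nilpotent-kernel extension), and that the $\cup_S$-product is still associative up to the relations generated by $d_S$ — this is exactly what the cocycle/coboundary structure on $C^1, C^2$ encodes. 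Running the inductive construction of $\overline H_{N+1}$ with the deformed data, the same dimension-counting that makes each $\sigma_N$ an isomorphism in Section \ref{formalgsect} goes through over $S$ because $S$ is (pro-)Artinian and everything is $S$-flat; so $A' := \varprojlim \overline H_{N+1}$ is an honest $S$-algebra with $A'/\ker\rho\cdot A' \simeq A$, i.e. $A'$ is a deformation of $A$ to $S$ in the usual sense.

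Finally I would close the loop: applying $L(-)$ to the algebra $A'$ just constructed recovers the deformed GMMP-data, i.e. $L(A') \simeq L_S$ as deformations of $L(A)$. This uses the fully faithful functor of the Theorem above ($\hat L_X$ and $\hat A_X$ being mutually inverse on the relevant subcategories): since $\hat A_X(L_S) = A'$ and $\hat L_X(A') = L(A')$, full faithfulness gives $L(A') \simeq L_S$, and one checks the identifications are compatible with the projections to $L(A)$ and to the point $k^r$, so the isomorphism is one of deformations. Conversely, any deformation $A'$ of $A$ to $S$ visibly gives a deformation $L(A')$ of $L(A)$, so the two deformation functors are identified.

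The main obstacle I expect is the middle step — verifying that flatness of the GMMP-deformation (flatness of the matrices $\cup_S$, $d_S$ over $S$, in the sense used in Section \ref{defsection} via the $A[G]$-module picture) is precisely equivalent to the condition that the inductive presentation algorithm runs without obstruction over $S$, i.e. that each $\ker\pi_{N+1}$ has the expected dimension. One must check that a deformation in the moduli of GMMP-algebras does not secretly collapse or enlarge the spaces $\overline H_N$ relative to $A\otimes_{k^r}S$; equivalently, that no ``extra'' relations or liftings of relations appear beyond those predicted by $d_S$. This is where the precise definition of deformation for the $A[G]$-module (flatness, and the requirement $k^r\otimes_S M_S \simeq$ the original) has to be matched line-by-line against the freeness hypotheses built into Definition \ref{Algorithm}; once that compatibility is established the rest is the bookkeeping already carried out in Sections \ref{formalgsect} and \ref{GMMPDef}, now with $k$ replaced by $S$.
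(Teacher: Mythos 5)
Your proposal is an elaboration of exactly the argument the paper gives: the paper's entire proof is the observation that flatness of the deformation forces all relations to be conserved, which is precisely the core of your middle step (flatness makes the monomial bases lift and the inductive construction run unobstructed over $S$, so $L_S=L(A')$ for $A'=\hat A(L_S)$). The extra bookkeeping you supply --- producing $A'$ via the algorithm of Definition~\ref{Algorithm} and closing the loop with the full-faithfulness theorem --- goes well beyond what the paper records, but it is the same approach, and the one step you flag as the remaining obstacle is exactly the point the paper's two-sentence proof takes for granted.
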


\begin{proof} This follows directly from the requirement of flatness of deformations of modules. All relations must be conserved when deforming.
\end{proof}

\begin{corollary} To give a deformation of an algebra $A$ to $S$ is equivalent to give a deformation of $L(A)$ to $S.$
\end{corollary}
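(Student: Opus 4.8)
The plan is to deduce the corollary from the preceding Lemma together with the left-inverse property of the functor $\hat L_X$ established in the Theorem of Section \ref{GMMPDef}. Concretely, one exhibits mutually inverse maps between the set $\defm{A}(S)$ of deformations of $A$ to $S$ and the set of deformations of the GMMP-algebra $L(A)$ to $S$, and checks that these maps are natural in $S\in\widehat{\cat{Art}}_k$, so that they assemble into an isomorphism of functors $\defm{A}\simeq\defm{L(A)}$.

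First I would write down the two maps. In one direction, send a deformation $A'$ of $A$ to $S$ to the GMMP-algebra $L(A')$; the Lemma, in the form of its proof (a flat deformation conserves all the defining relations), shows that $L(A')$ is indeed a deformation of $L(A)$, and that applying $k^r\otimes_S-$ recovers $L(A)$. In the other direction, given a deformation $L_S$ of $L(A)$ to $S$, the Lemma tells us $L_S\simeq L(A')$ for some deformation $A'$ of $A$ to $S$; we then recover $A'$ as $\hat A_X(L_S)$, the algebra of the GMMP-algebra. Next I would check that these assignments are mutually inverse: starting from $A'$ we get $\hat A_X(L(A'))=\hat A_X(\hat L_X(A'))\simeq A'$ by the left-inverse property; starting from $L_S\simeq L(A')$ we get $\hat L_X(\hat A_X(L_S))\simeq\hat L_X(A')=L(A')\simeq L_S$, using that $\hat L_X$ is fully faithful and hence inverts $\hat A_X$ on its essential image. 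Naturality in $S$ is then immediate because both $L(-)$ and $\hat A_X(-)$ are built from the same linear data $\cup$ and $d$ and commute with base change $-\otimes_S S'$.

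The main obstacle I anticipate is not a single computation but the bookkeeping forced by Remark \ref{uniquerem}: $\hat A_X$ and $\hat L_X$ are inverse only up to non-unique isomorphism because of the successive choices of monomial bases, so the ``bijection'' above is really a bijection of isomorphism classes; one must make sure the deformation functors are set up so that this is exactly the relevant notion (they are, since a deformation of a module, and likewise of a GMMP-algebra, is defined only up to isomorphism of deformations). A secondary point to pin down is that the module-flatness built into $\defm{L(A)}$ is precisely what forces a deformation $L_S$ of $L(A)$ to land in the subclass of GMMP-algebras of the form $L(A')$, rather than in the larger class of deformed GMMP-algebras described via $A[G]$-modules in Section \ref{defsection}; this is the step where the Lemma genuinely does the work, and I would present it by tracking the relations $d(v_u)=-\sum_t\beta_{t,u}\langle X,t\rangle$ from Definition \ref{Algorithm} through a lift to $S$ and observing that $S$-flatness leaves the coefficients $\beta_{t,u}$, hence the structure maps $\cup$ and $d$, with entries in $S$ reducing correctly mod $\ker\rho$.
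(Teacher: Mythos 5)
Your proposal is correct and follows essentially the same route as the paper: the preceding Lemma supplies the direction from deformations of $L(A)$ back to deformations of $A$, and the remaining direction ($A'\mapsto L(A')$) comes from the construction of $L$ on the generators of $A$. You flesh out what the paper leaves implicit — the explicit use of the left-inverse property $\hat A_X(\hat L_X(A'))\simeq A'$, the check that the two assignments are mutually inverse up to isomorphism, and the caveat from Remark \ref{uniquerem} about non-unique isomorphisms — all of which is consistent with, and rather more careful than, the paper's two-sentence argument.
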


\begin{proof} The only thing left to prove is that if $A'$ is a deformation of $A,$ then $L(A')$ is a deformation of $L(A).$ This follows from the definition on the generators of $A.$
\end{proof}

\end{document}